\newtheorem{theorem}{\bf Theorem}[section]
\newtheorem{remark}[theorem]{\bf Remark}
\begin{document}

\title{Fast algorithm of adaptive Fourier series}
\author{You Gao$^1$, Min Ku $^{2}$\footnote{Corresponding author.},
Tao Qian$^{3}$
\thanks{E-mail: gaoyou@mail.sysu.edu.cn, kumin0844@163.com, fsttq@umac.mo.
} \\
$^1$School of Mathematics (Zhuhai), Sun Yat-Sen University.\\
$^2$CIDMA, Department of Mathematics, University of Aveiro.\\
$^3$Department of Mathematics, University of Macau.}

%

\date{}
\maketitle
\begin{abstract}
\noindent Adaptive Fourier decomposition (AFD, precisely 1-D AFD or Core-AFD) was originated for the goal of positive frequency representations of signals. It achieved the goal and at the same time offered fast decompositions of signals. There then arose several types of AFDs. AFD merged with the greedy algorithm idea, and in particular, motivated the so-called pre-orthogonal greedy algorithm (Pre-OGA) that was proven to be the most efficient greedy algorithm. The cost of the advantages of the AFD type decompositions is, however, the high computational complexity due to the involvement of maximal selections of the dictionary parameters. The present paper offers one formulation of the 1-D AFD algorithm by building the FFT algorithm into it. Accordingly, the algorithm complexity is reduced, from the original $\mathcal{O}(M N^2)$ to $\mathcal{O}(M N\log_2 N)$, where $N$ denotes the number of the discretization points on the unit circle and $M$ denotes the number of points in $[0,1)$. This greatly enhances the applicability of AFD. Experiments are carried out to show the high efficiency of the proposed algorithm.
\end{abstract}

\textbf{Keywords:} Adaptive decomposition, Analytic signals, Hilbert space, Computational complexity

\textbf{MSC(2010):} 42A50, 32A30, 32A35, 46J15

\section{Introduction}\label{Intro}

\noindent One-dimensional adaptive Fourier decomposition (abbreviated as 1-D AFD) has recently been proposed and proved to be among the most effective greedy algorithms \cite{TY, Qian2014b, TQ}. AFD is originally developed for the contexts of the unit disc and the upper-half plane and now is formally called 1-D AFD, or Core-AFD. The reason for the last terminology is due to the fact that it becomes the constructive block of the lately developed variations of 1-D AFD, such as Unwinding AFD and Cyclic AFD, where the former is an algorithm for more effective frequency decompositions of signals \cite{T1}, and the latter is for finding solutions of $n$-best rational approximations of functions in the Hardy space \cite{QWe, Q22}. Most recently, the concept of AFD is generalized to approximations of linear combinations of the Szeg\"o kernels and their derivatives. In the later studies, such approximations are not necessarily obtained through greedy-type algorithms, viz., the maximal selection principle \cite{GS,RA}. They can be obtained by any method, for instance, SVM in learning theory \cite{MoQ1}, the regularizations in compressed sensing \cite{LQM1}, or the Tikhonov regularization, etc. In this article, we focus on a AFD algorithm of the greedy type using the maximal selection principle. The AFD type decompositions all have promising applications to system identification and signal analysis \cite{TM, DQ2} with proven effectiveness. Recently, 1-D AFD has been generalized in either the Clifford (Quaternionic) algebra \cite{QSW, QWY}, or the several complex variables settings \cite{Qian2014b}. In the sequel, when we use the notion AFD, we will specify the context for clearness.

\noindent However, since 1-D AFD involves maximal selections of the parameters in the Szeg\"{o} kernels, it has great computational complexity. For instance, the algorithm in \cite{TLZ} is shown to be of the computational complexity $\mathcal{O}(M N^2)$, where $N$ is the discretization of the unit circle and $M$ is the number of samples in the radius of the unit disc, on which the maximal value is selected. We note that the quantity $M$ can not be reduced because it is independent on the discretization on the unit circle. On the one hand, it has the necessity to reduce the computational complexity in order to make AFD more practical in applications. On the other hand, it is, in fact, feasible to build in FFT into the AFD algorithm. In the present paper, we provide one such algorithm reducing the complexity to $\mathcal{O}(M N\log_2 N)$ from the original $\mathcal{O}(M N^2)$, where $N$ is for the discretization of the unit circle and $M$ is the number of samples in the radius of the unit disc. In Section 2, we briefly recall 1-D AFD and its discretization scheme. In Section 3 we introduce the proposed algorithm and analyse its computational complexity. In Section 4, we give numerical examples to compare the precisions, the selected parameters and the related errors between what we propose with the original 1-D AFD algorithm.

\section{Preliminaries}

\noindent Let $L^2$ be the Hilbert space of signals with finite energy on the closed interval $[0,2\pi]$, equipped with the inner product
\begin{eqnarray}
\left<G,F\right> = \int^{2\pi}_{0}G(e^{it})\overline{F}(e^{it})dt,
\end{eqnarray}
where $G,F:[0,2\pi]\rightarrow\mathbb{C}$, and $\overline{a}$ denotes the usual complex conjugate of $a\in\mathbb{C}$ \cite{GD}.
$H^2 = H^2(\mathbb{D})$ denotes the Hardy space on the unit disk $\mathbb{D} = \left\{z\in\mathbb{C}: |z|<1\right\}$ of the complex plane $\mathbb{C}$.
$\{B_k\}^{+\infty}_{k=1}$ is the Takenaka-Malmquist system or orthonormal rational function system, see Ref. e.g. \cite{JG,BA,AH,BB}, where
\begin{eqnarray}
B_k(z) = B_{a_1,a_2,\ldots,a_k}(z) = \frac{1}{\sqrt{2\pi}}\frac{\sqrt{1-|a_k|^2}}{1-\overline{a}_k z}\prod\limits^{k-1}_{l=1}\frac{z-a_l}{1-\overline{a}_l z},
\end{eqnarray}
$a_k\in \mathbb{D},k\in\mathbb{N}$.

\noindent Based on $\{B_k\}^{+\infty}_{k=1}$, the core algorithm of AFD is constructed $\cite{TY, T1, TLZ}$. For a given analytic signal $G\in H^2$, with $G_1 = G$, there exits the decomposition
\begin{eqnarray}
G(z) = \left<G_1, e_{a_1}\right>e_{a_1} + G_2(z)\frac{z-a_1}{1-\overline{a_1}z},
\end{eqnarray}
where $e_{a_1} = \frac{\sqrt{1-|a_1|^2}}{1-\overline{a}_1 z}, a_1 \in \mathbb{D}, z \in \partial \mathbb{D}$,
\begin{eqnarray}
G_2(z) = \left(G_1 - \left<G_1, e_{a_1}\right>e_{a_1}(z)\right)\frac{1-\overline{a_1}z}{z-a_1},
\end{eqnarray}
and $a_1\in \mathbb{D}$ is selected by according to the maximal selection principle.
That is,
\begin{eqnarray}
a_1 = \arg\max\limits_{a\in\mathbb{D}}\left\{\left|\left<G_1,e_a\right>\right|^2\right\},
\end{eqnarray}
which is crucial for the core algorithm of AFD (cf. e.g. $\cite{TY}$). Repeating such process to the $n$-th step, we get
\begin{eqnarray}\label{Sn}
G(z) = \sum\limits^n_{k=1}\left<G_k,e_{a_k}\right>B_{a_1,\ldots,a_k}(z) + G_{n+1}(z)\prod\limits^n_{k=1}\frac{z-a_k}{1-\overline{a_k}z}=S_n+ G_{n+1}(z)\prod\limits^n_{k=1}\frac{z-a_k}{1-\overline{a_k}z},
\end{eqnarray}
where the reduced reminder $G_{k+1}$ is obtained through the recursive formula
\begin{eqnarray}
G_{k+1}(z) = \left(G_k(z)-\left<G_k,e_{a_k}\right>e_{a_k}\right)\frac{1-\overline{a_k}z}{z-a_k},
\end{eqnarray}
and
\begin{eqnarray}\label{ak}
a_k = \arg\max\limits_{a\in\mathbb{D}}\left\{\left|\left<G_k,e_a\right>\right|^2\right\}.
\end{eqnarray}

\noindent Moreover, due to the orthogonality and the unimodular property of Blaschke products, for each $n$ there holds

\begin{eqnarray}
\left\| G - \sum\limits^n_{k=1}\left<G_k,e_{a_k}\right>B_{k} \right\|^2 = \|G\|^2 - \sum\limits^n_{k=1}\left|\left<G_k,e_{a_k}\right>\right|^2.
\end{eqnarray}
As a M\"{o}bius transform is of norm 1 on the unit circle, the above equation is equal to $\left\|G_{n+1}\right\|^2$. The equation $\langle G_k,e_{a_k}\rangle=\langle G,B_k\rangle$ also holds because of the orthogonalization of $\{B_k\}_{k=1}^n$.
Then
\begin{equation}
\lim\limits_{n\rightarrow +\infty} \left\|G_{n+1}\right\|^2 =\lim\limits_{n\rightarrow +\infty}\left\| G - \sum\limits^n_{k=1}\left<G_k,e_{a_k}\right>B_{k} \right\|^2 =\lim\limits_{n\rightarrow +\infty} \left\| G - \sum\limits^n_{k=1}\left<G,B_k\right>B_{k}\right\|^2.
\end{equation}
It has been proved that above limits convergent to zero when parameters $a_k$ is selected by (\ref{ak}) (cf. e.g. \cite{TY}). Finally we have $$ G= \sum\limits^\infty_{k=1}\left<G_k,e_{a_k}\right>B_{k}=\sum\limits^\infty_{k=1}\left<G,B_k\right>B_{k}.$$

\noindent The above is called 1-D AFD or Core AFD. It can be shown that for $f(z)\in H^2(\mathbb{D})$, as $z\to e^{it}$ in the non-tangential manner there exists the non-tangential limits $f(e^{it})$ for almost all $e^{it}\in \partial \mathbb{D}$ (cf. e.g. \cite{JG}). The mapping between $f(z)$ and its boundary limit function $f(e^{it})$ is an isometric isomorphism. Then $f(e^{it})$ could be processed by the approximation theory in $H^2(\mathbb{D})$. The biggest computation amount in 1-D AFD of $f(e^{it})$ is to find a point $a_k\in\mathbb{D}, k=1,2,\cdots,$ satisfying
$\left|\left<G_k, e_{a_k}\right>\right|^2 = \max\limits_{a\in\mathbb{D}}\left|\left<G_k, e_{a}\right>\right|^2,$ where $e_{a} = \frac{\sqrt{1-|a|^2}}{1-\overline{a} z}, a\in\mathbb{D},z\in\partial\mathbb{D}$. The key step is to compute the following integral
\begin{eqnarray}\label{formula2}
\left< G_k, e_{a}\right> = \frac{1}{2\pi}\int^{2\pi}_0 G_k(e^{it}) \frac{\sqrt{1-|a|^2}}{1-a e^{-it}}dt, \forall a\in \mathbb{D}.
\end{eqnarray}

\section{Formulation, algorithm and complexity analysis}

\noindent In this section we will derive our approximation procedure of $(\ref{formula2})$ incorporating FFT. We denote $G_k$ as $G$ for simplifying our notation.

\subsection{Formulation}

\noindent We first give a discrete numerical model of $(\ref{formula2})$. Suppose that the interval $[0,2\pi)$ is evenly divided into $0 = t_0 < \dots<t_m<\dots < t_{2^K -1} < 2\pi$ , $t_m=\frac{2\pi m}{2^K}$ with $K$ being large. Then
\begin{eqnarray}\label{formula1}
\left< G, e_{a} \right> = \frac{1}{2\pi}\int^{2\pi}_0 G(e^{it}) \frac{\sqrt{1-|a|^2}}{1-a e^{-it}}dt \approx \sum\limits^{2^K-1}_{m=0} \frac{\sqrt{1-|a|^2}}{2^K} G\left(e^{i\frac{2\pi m}{2^K}}\right) \frac{1}{1-a e^{-i \frac{2\pi m}{2^K}}}.
\end{eqnarray}

\noindent We index $a\in \mathbb{D}$ in polar coordinate. For a fixed $r\left(0 < r <1\right)$, the circle of radius $r$ is evenly divided by the $2^K$ points $e^{i\frac{2\pi}{2^K}j}, j =0,1,\ldots, N= 2^K-1$, which is the same segmentation step distance as $(\ref{formula1})$. That is $a_j = r e^{i\frac{2\pi}{2^K}j}, j =0,1,\ldots, 2^K-1$. The relation (\ref{formula1}), therefore, can be rewritten by approximating $a$ by $a_j$ on a fixed circle as
\begin{eqnarray}\label{ku1}
\left< G, e_{a_j} \right> \approx \sum\limits^{2^K-1}_{m=0} \frac{\sqrt{1- r^2}}{2^K} G\left(e^{i\frac{2\pi m}{2^K}}\right) \frac{1}{1-a_j e^{-i \frac{2\pi m}{2^K}}}, j = 0,1,2,\ldots, 2^K-1.
\end{eqnarray}

\noindent To further reduce (12), we define the notation for the right side of $(\ref{ku1})$ as
\begin{equation}\label{term81}
\langle G,e_{a_j}\rangle^{\tilde{}}= \sum\limits^{2^K-1}_{m=0} \frac{\sqrt{1- r^2}}{2^K} G\left(e^{i\frac{2\pi m}{2^K}}\right) \frac{1}{1-a_j e^{-i \frac{2\pi m}{2^K}}}, j = 0,1,2,\ldots, 2^K-1.
\end{equation}
Then, a simple computation gives
\begin{eqnarray}\label{ku12}
\langle G,e_{a_j}\rangle^{\tilde{}}= \sum\limits^{2^K-1}_{l=0}\frac{\sqrt{1- r^2}}{2\pi}\frac{a_j^l}{1-a_j^{2^K}} c_l, j = 0,1,2,\ldots, 2^K-1,
\end{eqnarray}
where the coefficients
\begin{eqnarray}\label{term4}
c_l = \sum\limits^{2^K-1}_{m=0} G\left(e^{i\frac{2\pi m}{2^K}}\right) e^{-il \frac{2\pi m}{2^K}}, l\in\mathbb{N}\cup\{0\}.
\end{eqnarray}

\noindent In fact, observing that function $e^{-i\frac{2\pi}{2^K}}$ has a period $2^K$, then coefficient $c_l$ is a periodic function of $2^K$, i.e., $c_l = c_{l+n2^K}, n\in\mathbb{N}$. Indeed, noticing $\left|a_je^{-i \frac{2\pi m}{2^K}}\right|<1,a_j\in\mathbb{D}$. This allows us to get another form of $(\ref{term81})$ as follows:
\begin{eqnarray*}\label{term1}
\langle G,e_{a_j}\rangle^{\tilde{~}}& = & \sum\limits^{2^K-1}_{m=0} \sum\limits^{+\infty}_{l=0} \frac{\sqrt{1- r^2}}{2^K}a_j^l G\left(e^{i\frac{2\pi m}{2^K}}\right) e^{-il \frac{2\pi m}{2^K}}
= \sum\limits^{+\infty}_{l=0}\frac{\sqrt{1- r^2}}{2^K}a_j^l \sum\limits^{2^K-1}_{m=0} G\left(e^{i\frac{2\pi m}{2^K}}\right) e^{-il \frac{2\pi m}{2^K}}\nonumber\\
& = & \sum\limits^{+\infty}_{s=0} \sum\limits^{(s+1)2^{K}-1}_{l=s2^K}\frac{\sqrt{1- r^2}}{2^K}a_j^l c_l
= \sum\limits^{2^K-1}_{l=0}\frac{\sqrt{1- r^2}}{2\pi}\frac{a_j^l}{1-a_j^{2^K}} c_l, a_j\in\mathbb{D}.
\end{eqnarray*}

\noindent Substituting $a_j = r e^{i\frac{2\pi}{2^K}j} \in \mathbb{D}$ in $(\ref{ku12})$, we have
\begin{eqnarray}\label{rcl}
&& \langle G,e_{a_j}\rangle^{\tilde{}}= \frac{\sqrt{1- r^2}}{2\pi\left(1-a_j^{2^K}\right)}\sum\limits^{2^K-1}_{l=0}a_j^l c_l
= \frac{\sqrt{1- r^2}}{2\pi\left(1 - \left(r e^{i\frac{2\pi}{2^K}j}\right)^{2^K}\right)}\sum\limits^{2^K-1}_{l=0}\left(r e^{i\frac{2\pi}{2^K}j}\right)^l c_l \\
&& = \frac{\sqrt{1- r^2}}{2\pi\left(1 - r^{2^K} e^{i\frac{2\pi}{2^K}j2^K}\right)}\sum\limits^{2^K-1}_{l=0}
r^l e^{i\frac{2\pi}{2^K}jl} c_l = \frac{\sqrt{1- r^2}}{2\pi\left(1 - r^{2^K}\right)}\sum\limits^{2^K-1}_{l=0}
r^l c_l e^{i\frac{2\pi}{2^K}jl},~j =0,1,\ldots, 2^K-1. \nonumber
\end{eqnarray}

\noindent To compute $(\ref{rcl})$, we divide it into two steps. First, applying the FFT to all of $c_l, l = 0,1,2,\ldots, 2^K-1$.
In fact, for arbitrary $0 \leq l \leq 2^K-1, l\in\mathbb{N}\cup\{0\}$, starting with $(\ref{term4})$, we have
\begin{eqnarray}\label{term5}
c_l & = & \sum\limits^{2^K-1}_{m=0} G\left(e^{i\frac{2\pi m}{2^K}}\right) e^{-il \frac{2\pi m}{2^K}}
= \sum\limits^{2^{K}-2}_{2m=0} G\left(e^{i\frac{2\pi (2m)}{2^K}}\right) e^{-il \frac{2\pi (2m)}{2^K}}
+ \sum\limits^{2^{K}-1}_{2m+1=1} G\left(e^{i\frac{2\pi (2m+1)}{2^K}}\right) e^{-il \frac{2\pi (2m+1)}{2^K}}\nonumber\\
& = & \sum\limits^{2^{K-1}-1}_{m=0} G\left(e^{i\frac{2\pi (2m)}{2^K}}\right) e^{-i \frac{2\pi (2m)}{2^K}l}
+ \sum\limits^{2^{K-1}-1}_{m=0} G\left(e^{i\frac{2\pi (2m+1)}{2^K}}\right) e^{-i \frac{2\pi (2m+1)}{2^K}l}.
\end{eqnarray}

\noindent Let $W_{2^{K}}=e^{-i\frac{2\pi}{2^{K}}}$, we get
\begin{eqnarray}\label{term3}
\left\{
\begin{array}{ll}
c_l = \sum\limits^{2^{K-1}-1}_{m=0} G\left(W^{-2m}_{2^K}\right) W^{2ml}_{2^K} + \sum\limits^{2^{K-1}-1}_{m=0} G\left(W^{-(2m+1)}_{2^K}\right) W^{2ml}_{2^K} W^l_{2^K}, \\
c_{l+2^{K-1}} = \sum\limits^{2^{K-1}-1}_{m=0} G\left(W^{-2m}_{2^K}\right) W^{2ml}_{2^K}
- \sum\limits^{2^{K-1}-1}_{m=0} G\left(W^{-(2m+1)}_{2^K}\right) W^{2ml}_{2^K} W^l_{2^K}.
\end{array}
\right.
\end{eqnarray}

\noindent The second step is to use FFT formulation and to derive the following theorem.

\begin{theorem}\label{the2.1}
The right hand side of $(\ref{term81})$ is equal to the cases
\begin{eqnarray}\label{term8}
\left\{
\begin{array}{ll}
\langle G,e_{a_j}\rangle^{\tilde{~}} = \frac{\sqrt{1- r^2}}{2^K \left(1 - r^{2^K} \right)}\left( \sum\limits^{2^{K-1}-1}_{l=0}
r^{2l}c_{2l} W_{2^{K}}^{-2jl} + \sum\limits^{2^{K-1}-1}_{l=0}
r^{2l+1} c_{2l+1} W_{2^{K}}^{-2jl}W_{2^{K}}^{-j}\right),\\
\langle G,e_{a_{j+2^{K-1}}}\rangle^{\tilde{~}}
= \frac{\sqrt{1- r^2}}{2\pi\left(1 - r^{2^K} \right)}\left( \sum\limits^{2^{K-1}-1}_{l=0}
r^{2l}c_{2l} W_{2^{K}}^{-2jl} - \sum\limits^{2^{K-1}-1}_{l=0}
r^{2l+1} c_{2l+1} W_{2^{K}}^{-2jl}W_{2^{K}}^{-j}\right),
\end{array}
\right.
\end{eqnarray}
where $j=0,1,2,\ldots, 2^{K-1}-1$ and $c_l, l =0,1,2,\ldots, 2^{K-1}-1$, is given by $(\ref{term3})$.
\end{theorem}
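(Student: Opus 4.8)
The plan is to read Theorem \ref{the2.1} as a single Cooley--Tukey butterfly applied to the closed form already obtained in (\ref{rcl}), so that no new analysis is needed and the whole statement collapses to an algebraic identity governed by two roots-of-unity relations. The starting point is the equality derived just above the theorem,
\begin{equation*}
\langle G,e_{a_j}\rangle^{\tilde{~}} = \frac{\sqrt{1-r^2}}{2\pi\left(1-r^{2^K}\right)}\sum_{l=0}^{2^K-1} r^l c_l\, e^{i\frac{2\pi}{2^K}jl}.
\end{equation*}
Since $W_{2^K}=e^{-i\frac{2\pi}{2^K}}$, the kernel is exactly $e^{i\frac{2\pi}{2^K}jl}=W_{2^K}^{-jl}$, so the summand is $r^l c_l W_{2^K}^{-jl}$ and the task reduces to decimating this length-$2^K$ transform into two length-$2^{K-1}$ transforms indexed by the even and odd values of $l$.

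First I would split the index as $l=2l'$ and $l=2l'+1$ with $l'=0,\ldots,2^{K-1}-1$, using $W_{2^K}^{-j(2l')}=W_{2^K}^{-2jl'}$ for the even terms and $W_{2^K}^{-j(2l'+1)}=W_{2^K}^{-2jl'}W_{2^K}^{-j}$ for the odd terms. This gives
\begin{equation*}
\sum_{l=0}^{2^K-1} r^l c_l W_{2^K}^{-jl} = \sum_{l'=0}^{2^{K-1}-1} r^{2l'} c_{2l'} W_{2^K}^{-2jl'} + W_{2^K}^{-j}\sum_{l'=0}^{2^{K-1}-1} r^{2l'+1} c_{2l'+1} W_{2^K}^{-2jl'},
\end{equation*}
which, after multiplying by the prefactor and relabelling $l'$ as $l$, is the first line of (\ref{term8}) verbatim.

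The only genuinely substantive step is the second line, obtained by replacing $j$ with $j+2^{K-1}$; here the two elementary identities carry the whole argument. Because $W_{2^K}^{2^K}=e^{-i2\pi}=1$, the even twiddle factor is unaffected by the shift, $W_{2^K}^{-2(j+2^{K-1})l'}=W_{2^K}^{-2jl'}W_{2^K}^{-2^K l'}=W_{2^K}^{-2jl'}$, so the even sum is identical to that of the first line. Because $W_{2^K}^{2^{K-1}}=e^{-i\pi}=-1$, the odd outer factor flips sign, $W_{2^K}^{-(j+2^{K-1})}=-W_{2^K}^{-j}$, which is precisely the minus sign appearing in front of the odd sum in the second line of (\ref{term8}).

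I expect no conceptual difficulty; the main obstacle is purely one of bookkeeping, namely keeping the periodicity $W_{2^K}^{2^K}=1$ and the half-period antisymmetry $W_{2^K}^{2^{K-1}}=-1$ correctly aligned through the index shift so that exactly the even/odd pairing survives. One small internal inconsistency the write-up should reconcile is that the first line of (\ref{term8}) as stated carries $2^K$ in the denominator where both (\ref{rcl}) and the second line carry $2\pi$; the derivation above shows the correct normalizing constant is $\frac{\sqrt{1-r^2}}{2\pi(1-r^{2^K})}$ in both lines.
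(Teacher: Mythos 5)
Your proposal takes essentially the same route as the paper's own proof: write $\langle G,e_{a_j}\rangle^{\tilde{~}}$ in the closed geometric-series form, split the sum over even and odd $l$ (the Cooley--Tukey decimation), and obtain the second line by the shift $j\mapsto j+2^{K-1}$ together with the two identities $W_{2^K}^{2^K}=1$ and $W_{2^K}^{2^{K-1}}=-1$. The paper does exactly this, the only cosmetic difference being that it re-derives the closed form from (\ref{term81}) inside the proof (equation (\ref{term82})) instead of quoting (\ref{rcl}). Your butterfly algebra and the sign flip are correct.

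There is, however, one concrete error: you resolve the $2\pi$-versus-$2^K$ inconsistency the wrong way around. The theorem asserts equality with the right-hand side of (\ref{term81}), and (\ref{term81}) carries the factor $\frac{\sqrt{1-r^2}}{2^K}$ in each summand; this factor comes from the Riemann sum $\frac{1}{2\pi}\,dt\approx\frac{1}{2\pi}\cdot\frac{2\pi}{2^K}=\frac{1}{2^K}$ in (\ref{formula1}). Expanding $\frac{1}{1-a_je^{-it_m}}=\sum_{l\geq 0}a_j^{l}e^{-ilt_m}$ and invoking the periodicity $c_{l+2^K}=c_l$ turns $\sum_{l\geq 0}a_j^{l}c_l$ into $\frac{1}{1-a_j^{2^K}}\sum_{l=0}^{2^K-1}a_j^{l}c_l$ \emph{without altering the prefactor}, so the correct closed form is
\[
\langle G,e_{a_j}\rangle^{\tilde{~}}=\frac{\sqrt{1-r^2}}{2^K\left(1-r^{2^K}\right)}\sum_{l=0}^{2^K-1}r^{l}c_l\,W_{2^K}^{-jl}.
\]
In other words, the typo lives in (\ref{ku12}), in (\ref{rcl}), and in the \emph{second} line of (\ref{term8}) --- where $\frac{1}{2^K}$ silently became $\frac{1}{2\pi}$ mid-derivation --- and not in the first line, which is the one you flagged as anomalous. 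The paper's own proof restarts from (\ref{term81}), carries $2^K$ consistently, and ends with $2^K$ in both lines. As written, the identity you prove is off from the right-hand side of (\ref{term81}) by the factor $2^K/(2\pi)$; replacing your normalizing constant by $\frac{\sqrt{1-r^2}}{2^K\left(1-r^{2^K}\right)}$ in both lines makes your argument correct and identical to the paper's.
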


\begin{proof}
Observing from $(\ref{term81})$, one gets
\begin{eqnarray}\label{term82}
&& \langle G,e_{a_j}\rangle^{\tilde{~}}=\frac{\sqrt{1- r^2}}{2^K \left(1-a_j^{2^K}\right)}\sum\limits^{2^K-1}_{l=0}a_j^l c_l
= \frac{\sqrt{1- r^2}}{2^K \left(1 - \left(r W_{2^{K}}^{-j}\right)^{2^K}\right)}\sum\limits^{2^K-1}_{l=0}\left(r W_{2^{K}}^{-j}\right)^l c_l \\
&& = \frac{\sqrt{1- r^2}}{2^K \left(1 - r^{2^K} W_{2^{K}}^{-j2^K}\right)}\sum\limits^{2^K-1}_{l=0}
r^l W_{2^{K}}^{-jl} c_l = \frac{\sqrt{1- r^2}}{2^K \left(1 - r^{2^K}\right)}\sum\limits^{2^K-1}_{l=0}
r^l c_l W_{2^{K}}^{-jl} \nonumber\\
&& = \frac{\sqrt{1- r^2}}{2^K \left(1 - r^{2^K} \right)}\sum\limits^{2^K-2}_{2l=0}
r^{2l} W_{2^{K}}^{-2lj} c_{2l} + \frac{\sqrt{1- r^2}}{2^K \left(1 - r^{2^K}\right)}\sum\limits^{2^K-1}_{2l+1=1}
r^{2l+1} W_{2^{K}}^{-2lj}W_{2^{K}}^{-j} c_{2l+1} \nonumber\\
&& = \frac{\sqrt{1- r^2}}{2^K \left(1 - r^{2^K} \right)}\left( \sum\limits^{2^{K-1}-1}_{l=0}
r^{2l} W_{2^{K}}^{-2lj} c_{2l} + \sum\limits^{2^{K-1}-1}_{l=0}
r^{2l+1} W_{2^{K}}^{-2lj}W_{2^{K}}^{-j}c_{2l+1} \right), j =0,1,\ldots, 2^K-1.
\end{eqnarray}

\noindent Hence, we get
\begin{eqnarray}
\langle G,e_{a_{j+2^{K-1}}}\rangle^{\tilde{~}}&=& \frac{\sqrt{1- r^2}}{2^K \left(1 - r^{2^K} \right)}\left( \sum\limits^{2^{K-1}-1}_{l=0}
r^{2l}c_{2l} W_{2^{K}}^{-2\left(j+2^{K-1}\right)l} + \sum\limits^{2^{K-1}-1}_{l=0}
r^{2l+1} c_{2l+1} W_{2^{K}}^{-2\left(j+2^{K-1}\right)l}W_{2^{K}}^{-\left(j+2^{K-1}\right)}\right)\nonumber\\
&=& \frac{\sqrt{1- r^2}}{2^K \left(1 - r^{2^K} \right)}\left( \sum\limits^{2^{K-1}-1}_{l=0}
r^{2l}c_{2l}W_{2^{K}}^{-2jl} - \sum\limits^{2^{K-1}-1}_{l=0}
r^{2l+1} c_{2l+1} W_{2^{K}}^{-2jl}W_{2^{K}}^{-j}\right).
\end{eqnarray}

\noindent Therefore, for $0 \leq j < 2^{K-1}-1$, we have
\begin{eqnarray}
\left\{
\begin{array}{ll}
\langle G,e_{a_j}\rangle^{\tilde{~}}= \frac{\sqrt{1- r^2}}{2^K \left(1 - r^{2^K} \right)}\left( \sum\limits^{2^{K-1}-1}_{l=0}
r^{2l}c_{2l} W_{2^{K}}^{-2jl} + \sum\limits^{2^{K-1}-1}_{l=0}
r^{2l+1} c_{2l+1} W_{2^{K}}^{-2jl}W_{2^{K}}^{-j}\right), \\
\langle G,e_{a_{j+2^{K-1}}}\rangle^{\tilde{~}}= \frac{\sqrt{1- r^2}}{2^K \left(1 - r^{2^K} \right)}\left( \sum\limits^{2^{K-1}-1}_{l=0}
r^{2l}c_{2l} W_{2^{K}}^{-2jl} - \sum\limits^{2^{K-1}-1}_{l=0}
r^{2l+1} c_{2l+1} W_{2^{K}}^{-2jl}W_{2^{K}}^{-j}\right).
\end{array}
\right.
\end{eqnarray}
\end{proof}

\begin{remark}

\noindent Since the computational complexity of directly computing (\ref{term81}) is $\mathcal{O}(N^2)$, it is unacceptable especially when $N$ takes a large positive integer. Thus, what is the key point is to reduce the computational complexity of (\ref{term81}) from $\mathcal{O}(N^2)$ to $\mathcal{O}(N\log_2 N)$. We achieve this by constructively changing (\ref{term81}) into (\ref{ku12}) with coefficients $c_l$ given by (\ref{term4}), and then transferring (\ref{ku12}) into (\ref{term8}). This is because through the technical observation we can make full use of the fast Fourier transform algorithm (FFT) to compute (\ref{term8}) and coefficients $c_l$ given by (\ref{term4}), whose computational complexity is $\mathcal{O}(N\log_2 N)$. These ideas are the starting of the following algorithm.

\end{remark}

\subsection{Algorithm}\label{algorithm}

\noindent In this section, we propose our fast algorithm for the 1-D AFD, making use of the FFT mechanism. Parameters are expressed in polar coordinate. Radius is sampled evenly by $M$ points as $0 <r_1 < r_2 <\ldots <r_s<\ldots<r_{M-1}< r_M <1$, where $r_s = \frac{s}{M+1}, s = 1,2,\ldots,M $.  Denote the grid mesh on the circle of radius $r_s$ is
$$C_{s,2^{K}}=\{a_j = r_s W_{2^{K}}^{-j}, j = 0, 1, 2, \dots, 2^{K}-1\}.$$ This set is in particular involved in the FFT formulation of the subsection $3.1$.

\subsubsection{Algorithm description}

\noindent {\bf Step 1}. Compute $\langle G,e_{a_j}\rangle^{\tilde{}}, j =0,1,2,\ldots, 2^{K}-1$ for all of $a_j\in C_{s,2^{K}}$, $s=1,2,\dots,M$.\\
\noindent Applying $(\ref{term3})$, we compute all of $r_s^l, c_l, l = 0,1,2,\ldots, 2^K-1$, $s=1,2,\dots,M$, and store them.\\ Next, starting from the recursive relation $(\ref{term8})$ with a fixed $r_s$, we need to work backward $K$ times to obtain the expression of the input data. Set the input of the recursive relation to be $f(l)=r_s^lc_l,l=1,2,\dots,2^K-1$. Then the twiddle factor of $(\ref{term8})$ is $W_{2^{K}}^l$, while that of FFT is $W_{2^{K}}^{-l}$. Because $f$ is output in the order of $l$, the bit-reversal permutation of $m$ is necessary. Similarly, we need to input $f$ in the order of the bit-reversal permutation of $l$ to obtain $\langle G,e_{a_j}\rangle^{\tilde{}}, j =0,1,2,\ldots, 2^{K}-1$. We repeat the above procedure to obtain $\langle G,e_{a_j}\rangle^{\tilde{}}$ for all $s=1,2,\dots,M$.

\noindent {\bf Step 2.} Find a point $a_{s',j'}\in\displaystyle \mathop{\cup}_{s=1}^M C_{s,2^{K}}$ satisfying $\left|\langle G,e_{a_{s',j'}}\rangle^{\tilde{}}\right|^2 = \max\limits_{\substack{a_j\in C_{s,2^{K}}\\s=1,2,\dots,M }}\left|\langle G,e_{a_j}\rangle^{\tilde{}}\right|^2$.\\
We compare all $\langle G,e_{a_j}\rangle^{\tilde{}}$, $j =0,1,2,\ldots, 2^{K}-1, s= 1,2,\ldots,M$, to find the maximum value and the corresponding parameter $a_{s',j'}=r_{s'} W_{2^{K}}^{-j'}$.

\begin{remark}

In our algorithm proposed in the subsection $3.1$, we select evenly the sampling. While the non-uniform/non-evenly sampling is allowed, our algorithm still holds by applying the non-uniform discrete Fourier transforms developed in Ref. e.g. $\cite{FJ}$.

\end{remark}

\subsection{Computational complexity}

\noindent In this section we will analyse the computation complexity of the proposed algorithm in Section $\ref{algorithm}$. Here let $N=2^K$.

\noindent In Step 1, applying the argument of the classical FFT, the computational complexity of $(\ref{term3})$ is $\mathcal{O}(N\log_2 N)$. The computational complexity of the input signal $f(l)=r^l c_l, l = 0,1,2,\ldots, N-1$ is $\mathcal{O}(N+N\log_2 N)$, which is also $\mathcal{O}\left(N \log_2 N\right)$. For $s =1,2,\ldots,M$, the total computational complexity of $\langle G,e_{a_j}\rangle^{\tilde{}},j =0,1,\ldots, 2^K-1, s=1,2,\ldots,M$, is $\mathcal{O}\left(MN \log_2 N\right)$.

\noindent In Step 2, in order to find a point $a_{s',j'}\in\displaystyle \mathop{\cup}_{s=1}^M C_{s,2^{K}}$, satisfying $\left|\langle G,e_{a_{s',j'}}\rangle^{\tilde{}} \right|^2 =  \max\limits_{\substack{a_j\in C_{s,2^{K}}\\s=1,2,\dots,M }}\left|\langle G,e_{a_j}\rangle^{\tilde{}}\right|^2$, we compare the absolute value of all $\langle G,e_{a_j}\rangle^{\tilde{}}$ obtained from Step 1 one by one. As the length of the above absolute values is $MN$, the computational complexity to find the maximum is $\mathcal{O}(MN)$.

\noindent Totally, the computational complexity of Algorithm 1 is $\mathcal{O}\left(MN \log_2 N\right)$.

\begin{remark}
For a fixed $r:0<r<1$, $\langle G,e_{a_j}\rangle^{\tilde{~}}, |a_j|=r, ~j = 0,1,2,\ldots, 2^{K}-1$, could be computed directly from $(\ref{term81})$, the computational complexity is $\mathcal{O}(N^2)$, being considerably higher than that of $(\ref{term3})$.
But if we do not chose evenly $2^{K-m}\left(0 < m <K, m\in\mathbb{N}\right)$ on the circle of radius $r$, the argument of the classical FFT cannot play a role in the computation of 1-D AFD. In such a case, $\langle G,e_{a_j}\rangle^{\tilde{}}$ can be obtained from $(\ref{term81})$ with the computational complexity $\mathcal{O}(N^2)$.
\end{remark}

\section{Numerical experiments}

\noindent In this section, we temporarily call the proposed algorithm FFT-AFD. We also call the direct computation of AFD without FFT mechanism as Direct-AFD. The efficiency of the proposed algorithm is analysed from two different aspects. The first one is to list the output of the proposed algorithm: the running times, the approximation results, the parameters $a_k$ and relative errors. The second one is to make a running time comparison when the original signal is discretized by a different length of samples. The comparisons are presented between the output of the proposed algorithm and the ones of the Direct-AFD algorithm in \cite{TLZ}.

\noindent We define the relative error by
\begin{eqnarray}\label{er}
\delta=\frac{||G-S_n||^2}{||G||^2},
\end{eqnarray}
where $G$ is the original signal and $S_n$ is the summation of $n$ terms, i.e. $S_n=\sum\limits^n_{k=1}\left<G,B_k\right>B_{k}$.

\noindent In all of the following experiments, the radius of the unit disc $r=0,0.1,0.2,...0.8$ will be considered. The CPU of the utilized computer is the Intel G540 under the default setting of a single thread. All experiments are conducted in Matlab 2012b.

\subsection{Basic experiments}

\noindent All original functions are sampled by the 1024 points in this part.

\noindent {\bf Case 1.} The original function is chosen as $$f_1=\frac{(0.0247e^{i3t}+0.355e^{i2t})}{(1-0.3679e^{it})}\in H^2.$$ 

\noindent The time consuming to run 10 steps is shown in Table 1 below. The experiments are repeated 6 times.

\begin{table}[!th]
\caption{Running time (s)}
\label{table2}
\centering
\begin{tabular}{|c|c|c|c|c|c|c|}
\hline
FFT-AFD&0.2617& 0.2609&0.2613& 0.2610& 0.2609& 0.2614 \\
\hline
Direct-AFD& 1.3315&1.3297 & 1.3294& 1.3323& 1.3294 & 1.3329 \\
\hline 
\end{tabular}
\end{table}

\noindent The real part of approximation results is shown in Table \ref{c1}.
\begin{table}[!th]
\centering
\caption{Comparison between the approximation results in Case 1}
\label{c1}
\begin{tabular}{ccccc}
\hline
$S_2$&$S_4$&$S_6$&$S_{8}$&$S_{10}$\\
\hline
\noalign{\vskip 1mm}
\includegraphics[width=1.3in]{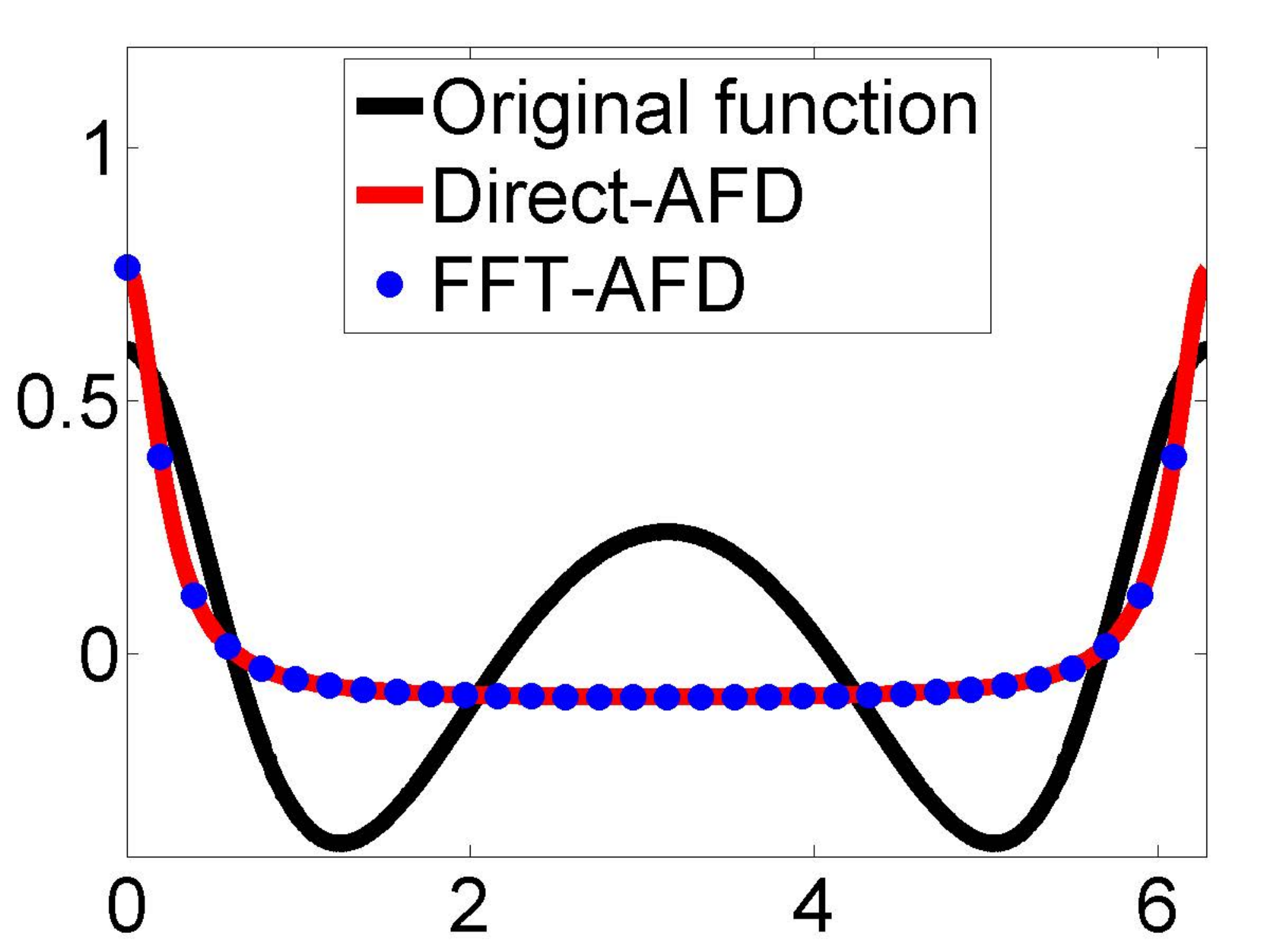}&\includegraphics[width=1.3in]{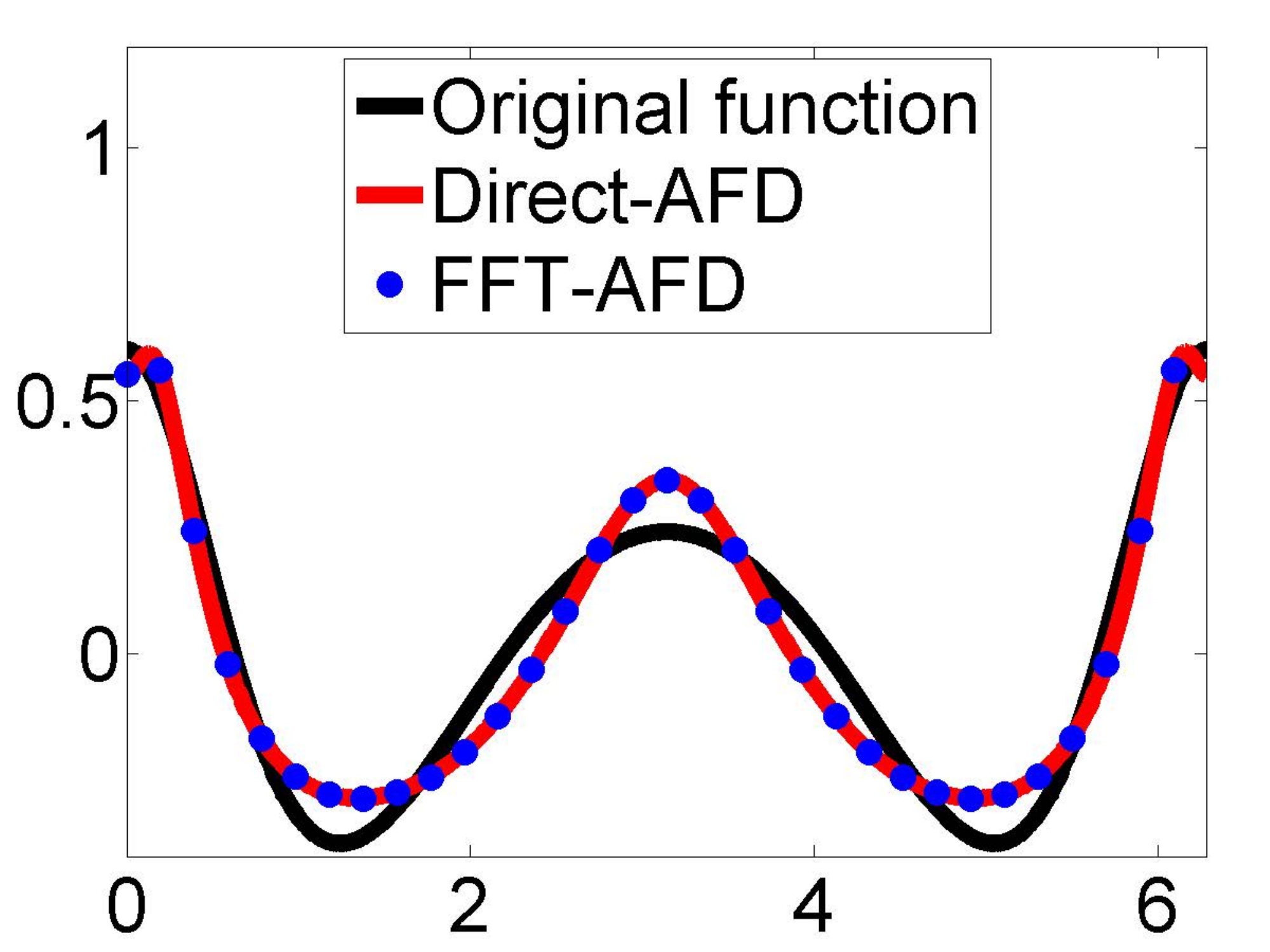}&\includegraphics[width=1.3in]{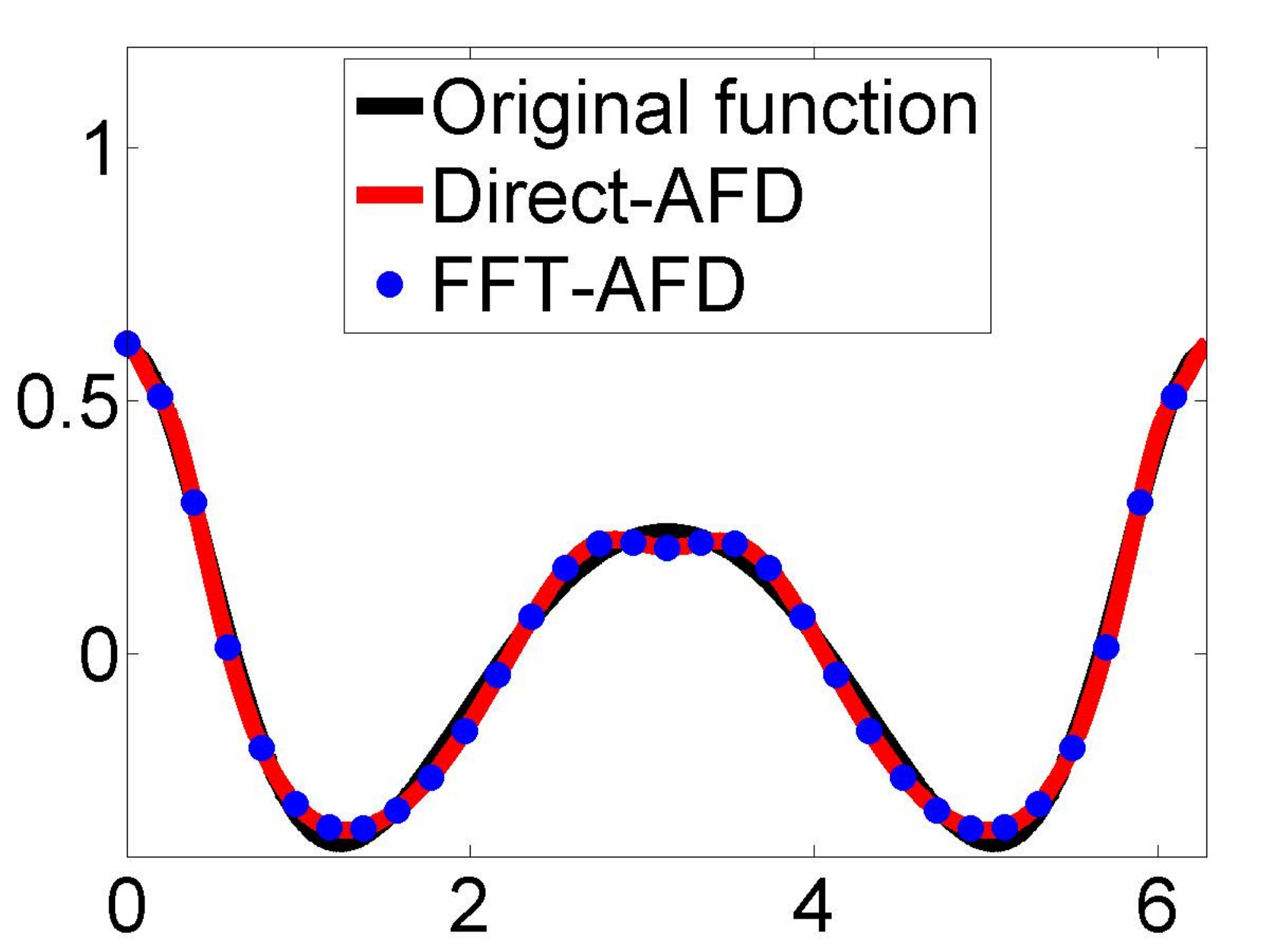}&\includegraphics[width=1.3in]{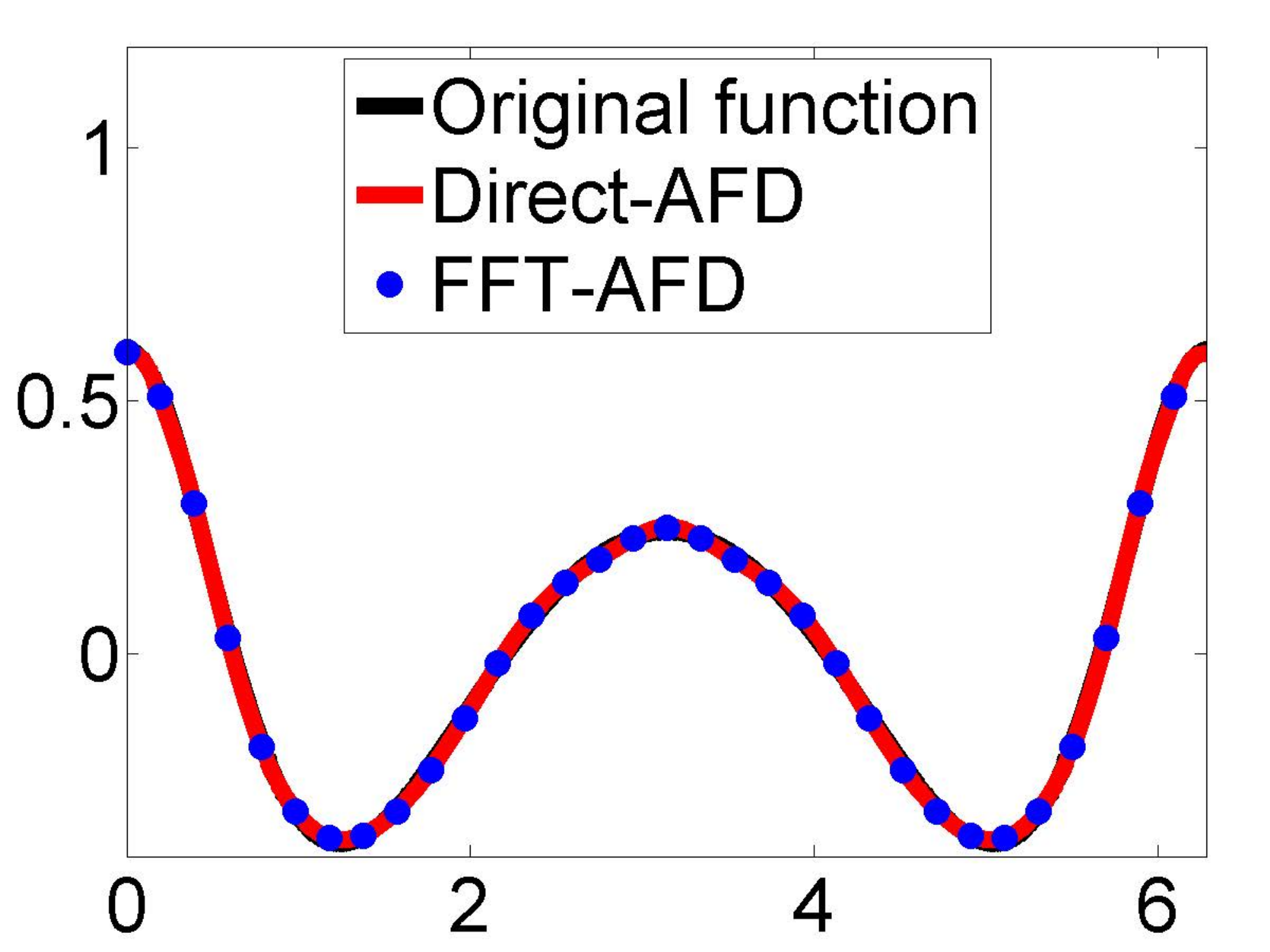}&\includegraphics[width=1.3in]{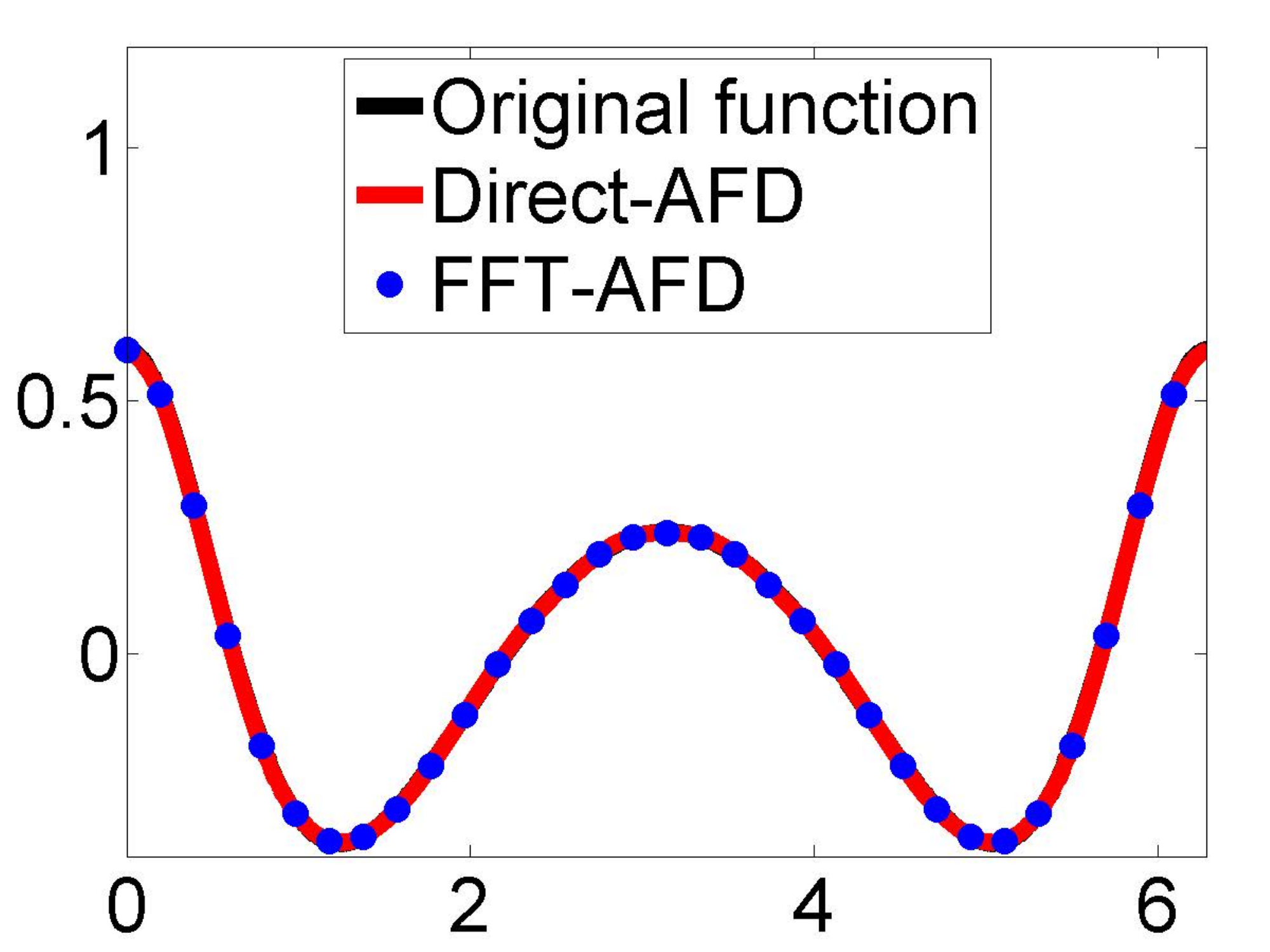}\\
\hline
\end{tabular}
\end{table}

\noindent The $a_k$'s and the relative errors obtained from 1-D AFD with our proposed algorithm are shown in the following Tables \ref{table1} and \ref{table3}.

\begin{table}[!th]
\begin{minipage}[t]{.5\textwidth}
\caption{Parameters $a_k$ in Case 1}
\includegraphics[width=4in]{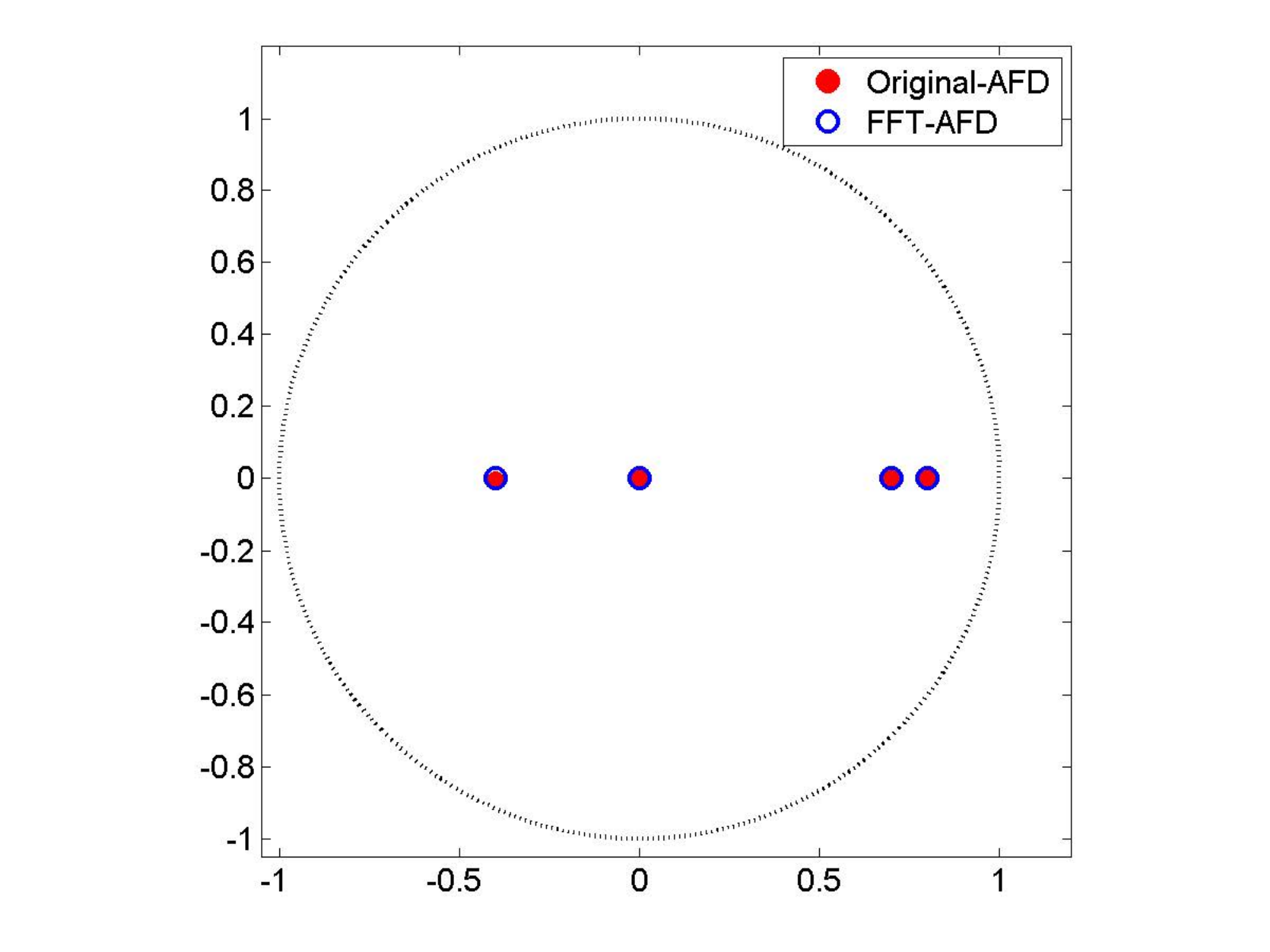}
\label{table1}
\end{minipage}
\begin{minipage}[t]{.5\textwidth}
\caption{Relative error (dB) }
\label{table3}
\centering
\begin{tabular}{|c|c|c|c|c|}
\hline 
$N$&FFT-AFD&Direct-AFD\\
\hline
$ 1$ & 1.0000   & 1.0000 \\
\hline 
2 & 0.5790   & 0.5790 \\
\hline 
3 & 0.2092 & 0.2093 \\
\hline 
4 &0.0553  & 0.0553 \\
\hline 
5 & 0.0189  & 0.0189 \\
\hline 
6 & 0.0052 & 0.0052 \\
\hline 
7 & 0.0017 & 0.0017 \\
\hline 
8 & 0.0005& 0.0005 \\
\hline 
9 & 0.0002& 0.0002 \\
\hline 
10 & 0.0000 & 0.0001 \\
\hline 
\end{tabular}
\end{minipage}
\end{table}

\noindent {\bf Analysis 1.} From Table \ref{table2}, it is clear that the numerical computation is significantly accelerated. From Table \ref{table1}, the difference of the parameters $a_k$ and that of the relative error are limited in 0.0001. These data indicate that our proposed algorithm actually achieves the effects of 1-D AFD in less time.

\noindent {\bf Case 2.} $f_1$ is a rational function being of good smoothness. Then $f_2$ is given as an example of the step functions as $$f_2=sgn(\sin{t}).$$ 

\noindent The running time, $a_k$ and the relative error are given in Tables $\ref{table4}$ as follows.

\begin{table}[h]
\caption{Running time (s)}
\label{table4}
\centering
\begin{tabular}{|c|c|c|c|c|c|c|}
\hline
FFT-AFD&0.2618& 0.2616&0.2593& 0.2598&0.2598&0.2598 \\
\hline
Direct-AFD&1.3110& 1.3157& 1.3294& 1.3099&1.3150 & 1.3079\\
\hline 
\end{tabular}
\end{table}

\noindent The approximation results is shown in Table \ref{c2}.
\begin{table*}[h]
\centering
\caption{Comparison between the approximation results in Case 2}
\label{c2}
\begin{tabular}{ccccc}
\hline
$S_2$&$S_4$&$S_6$&$S_{8}$&$S_{10}$\\
\hline
\noalign{\vskip 1mm}
\includegraphics[width=1.3in]{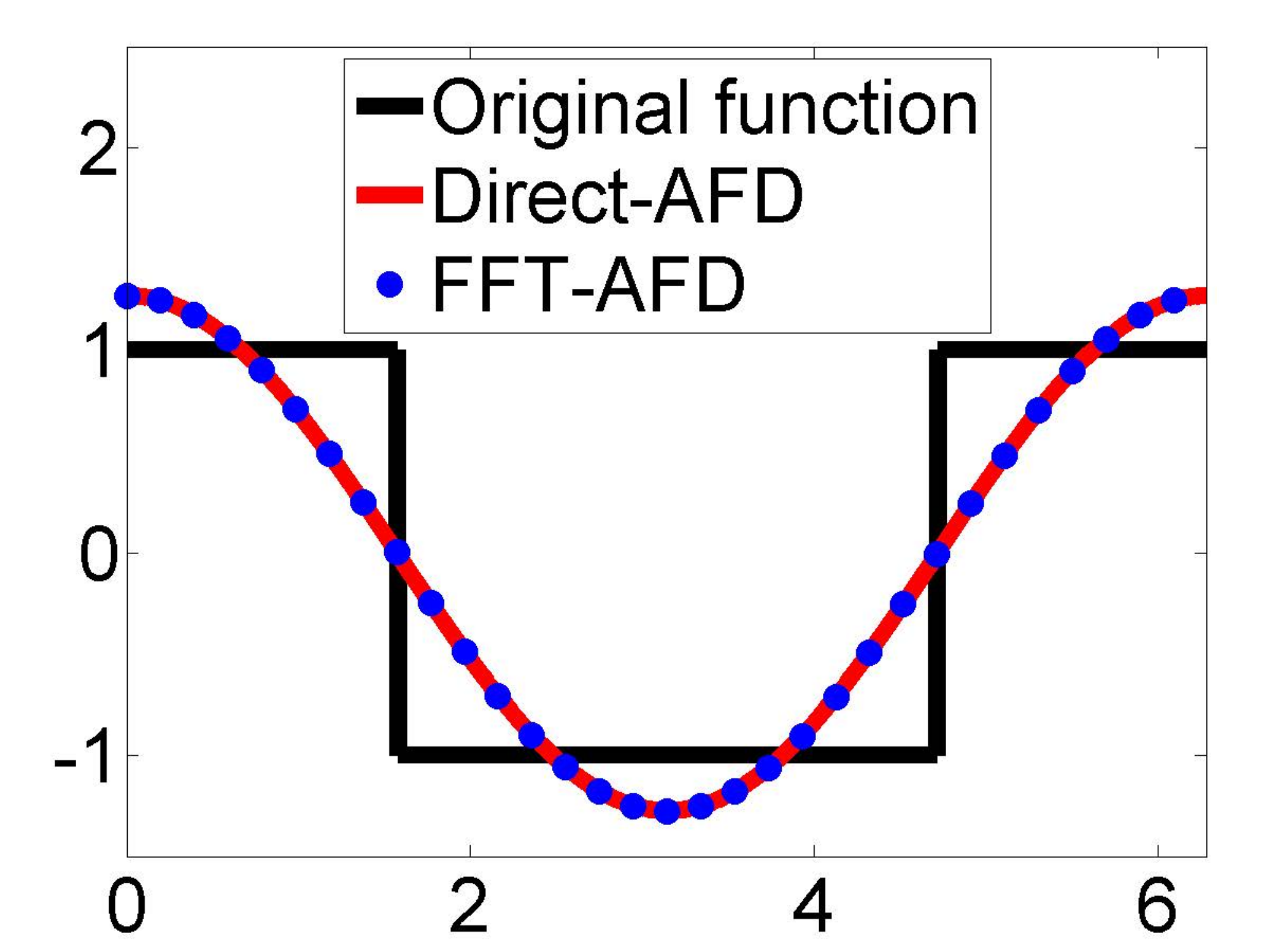}&\includegraphics[width=1.3in]{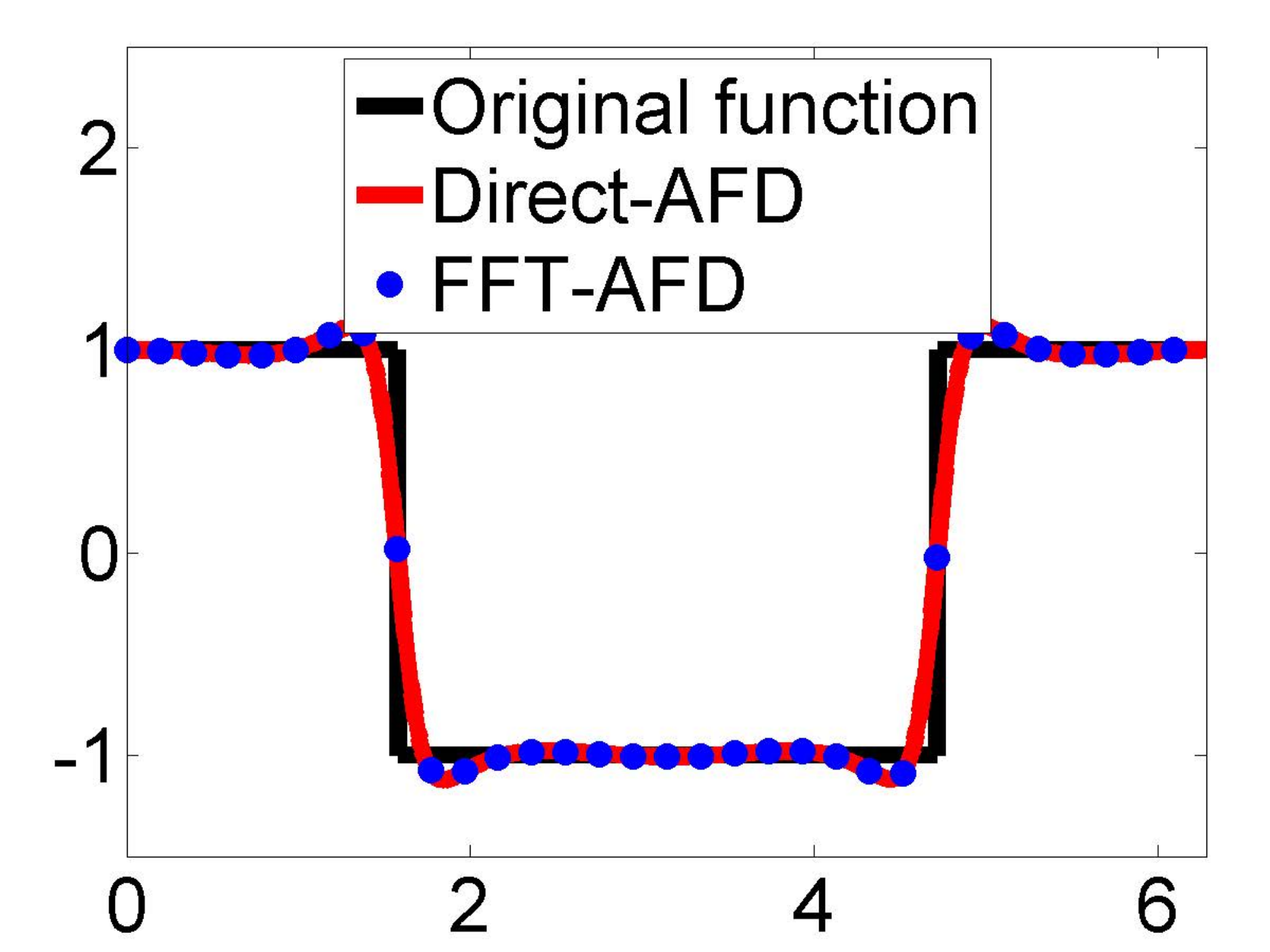}&\includegraphics[width=1.3in]{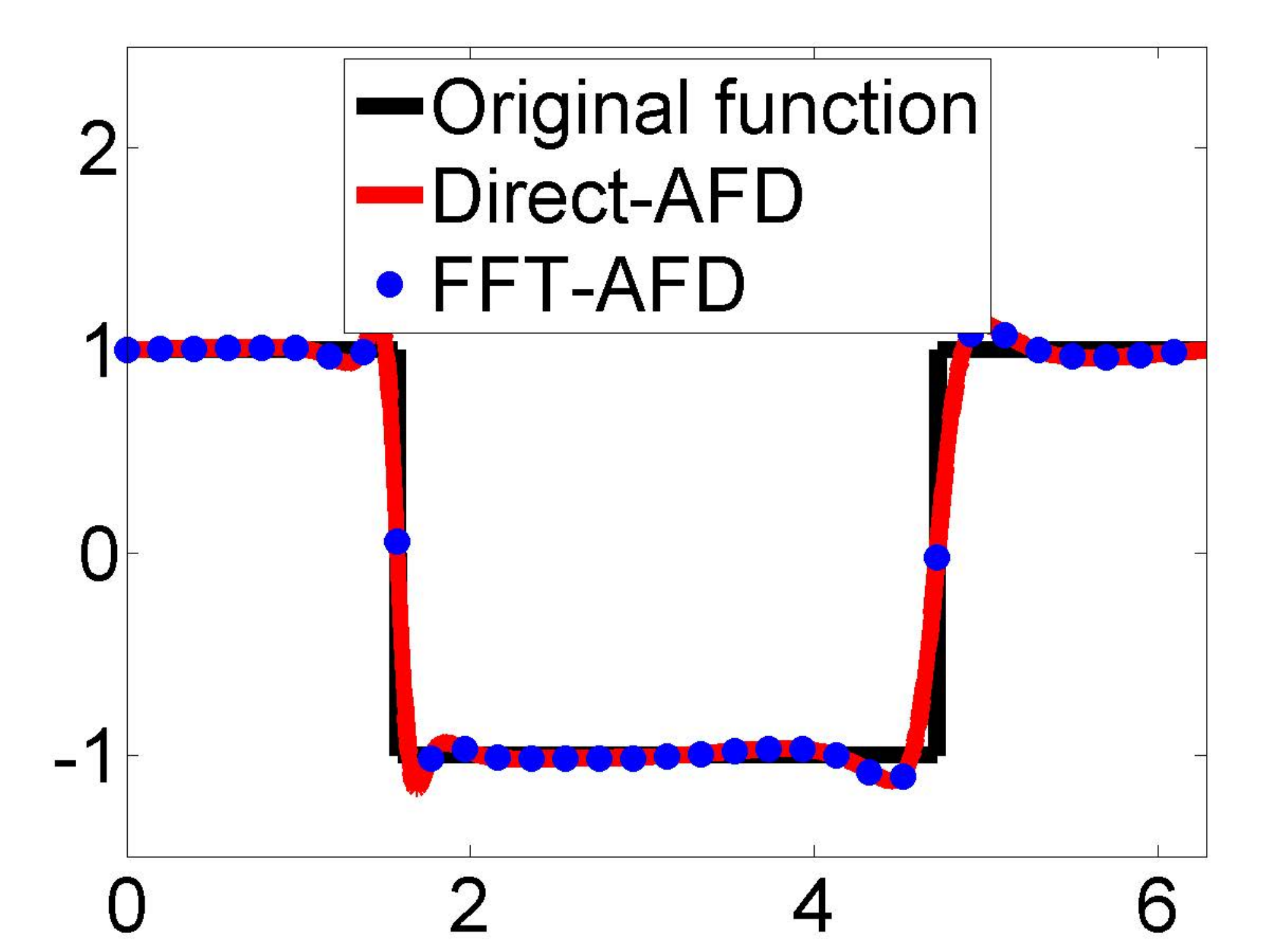}&\includegraphics[width=1.3in]{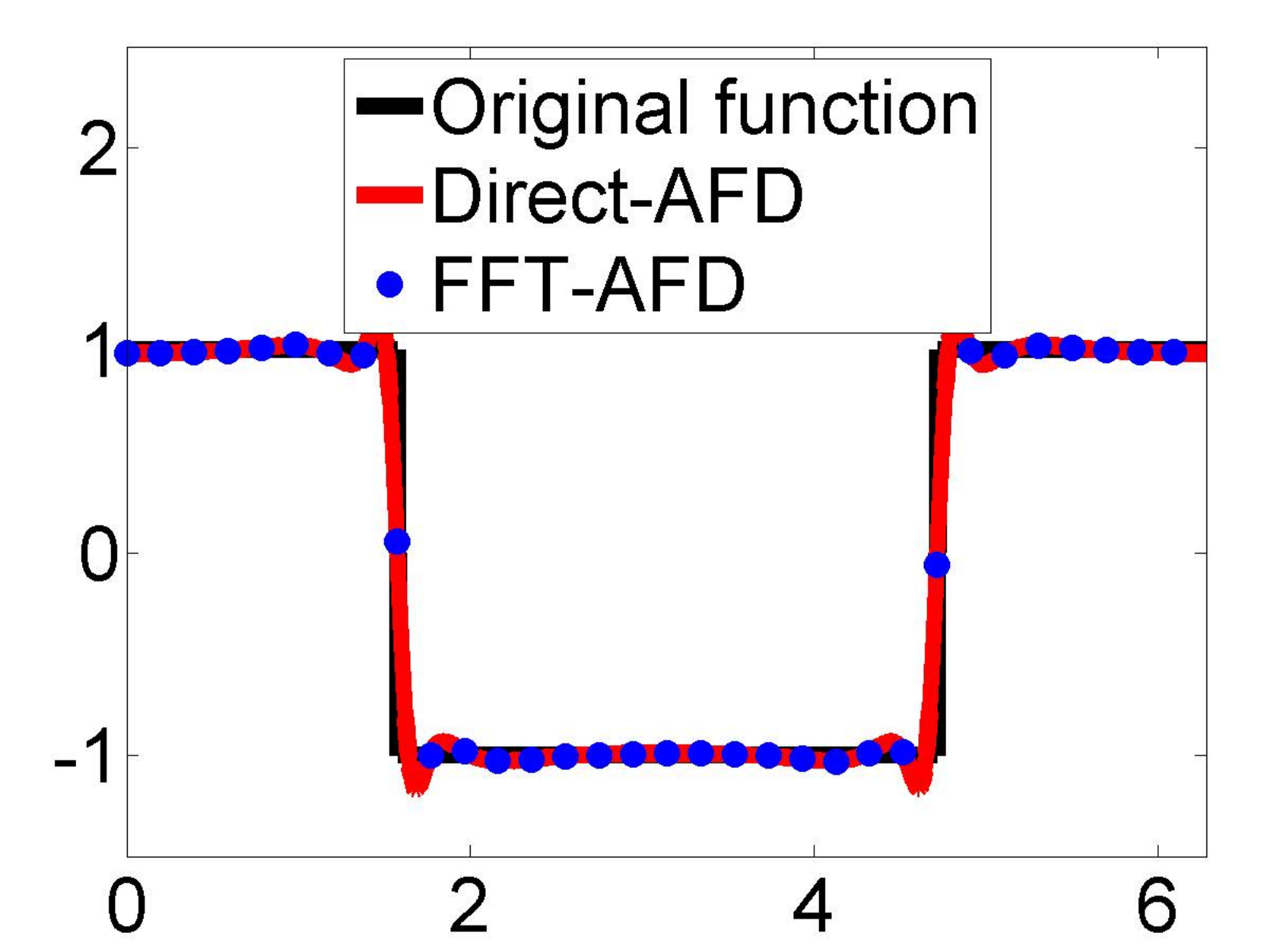}&\includegraphics[width=1.3in]{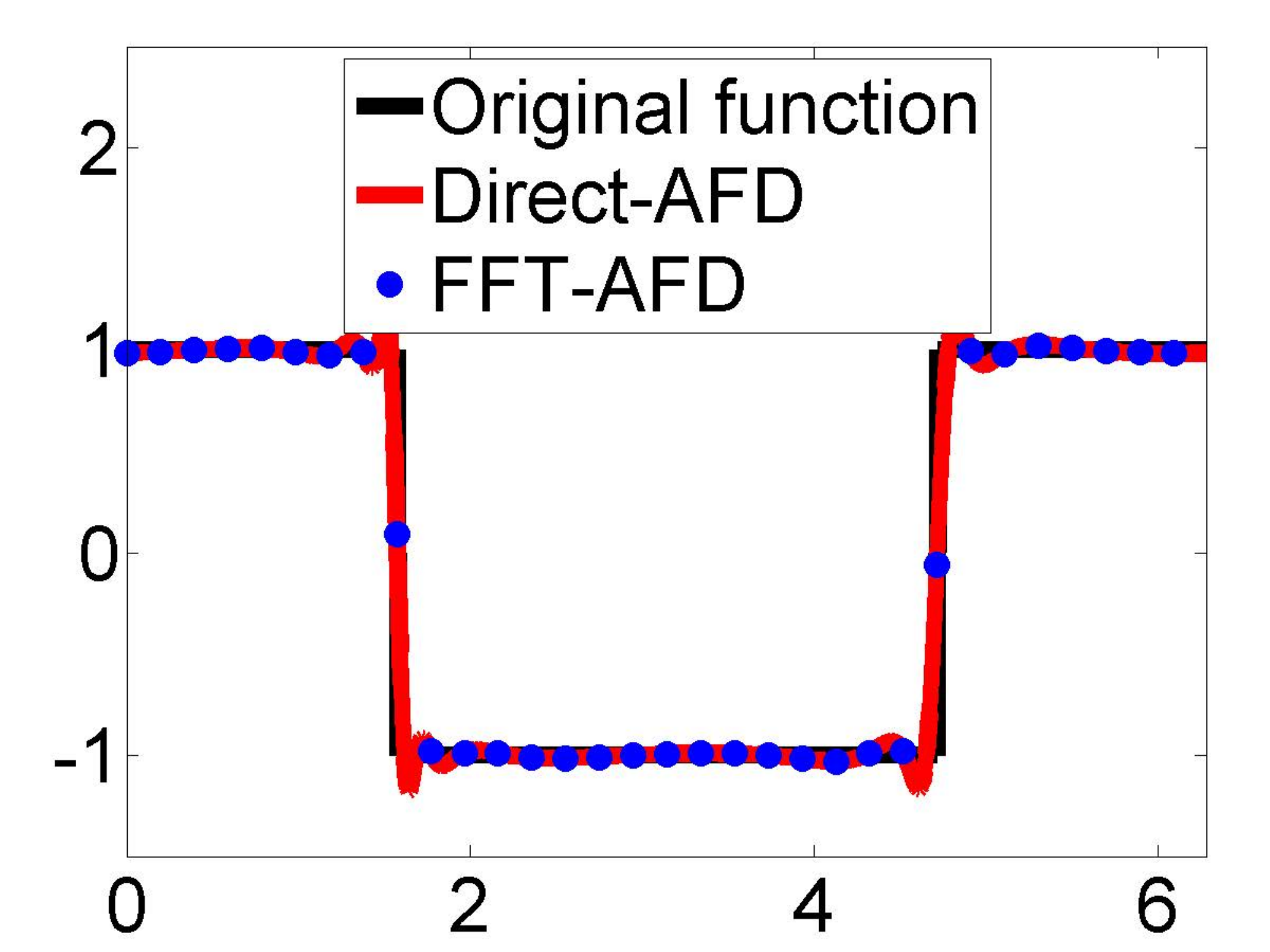}\\
\hline
\end{tabular}
\end{table*}

\noindent The $a_k$'s and the relative errors obtained from our proposed algorithm are shown in Tables \ref{table5} and \ref{table6}.

\begin{table}[h]
\begin{minipage}[t]{.5\textwidth}
\centering
\caption{Parameters $a_k$ in Case 1}
\includegraphics[width=4in]{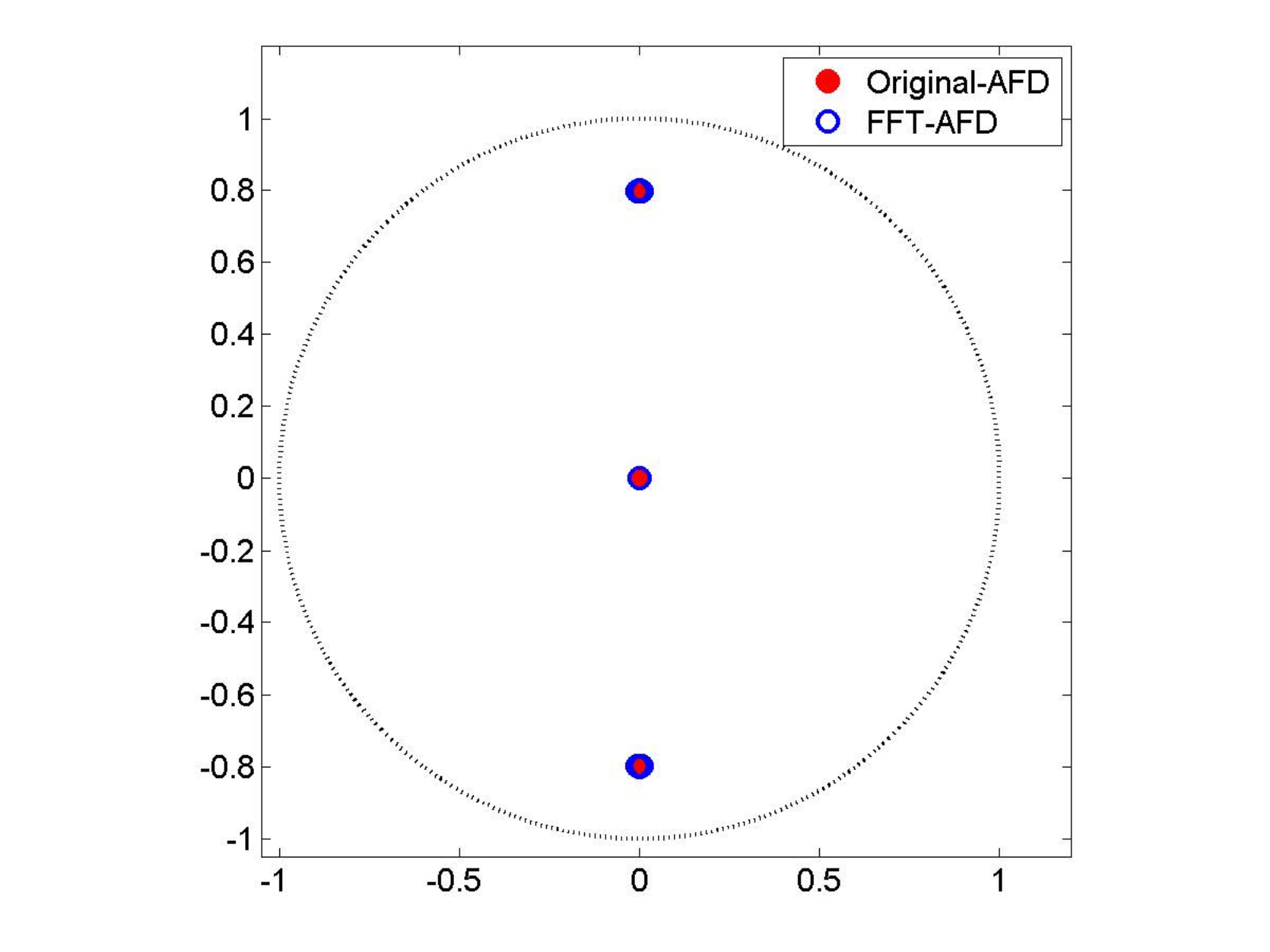}
\label{table5}
\end{minipage}
\begin{minipage}[t]{.5\textwidth}
\caption{Relative error (dB) }
\label{table6}
\centering
\begin{tabular}{|c|c|c|c|c|}
\hline 
$N$&FFT-AFD&Direct-AFD\\
\hline
1& 1.0000  & 1.0000\\
\hline 
2& 0.1895& 0.1895\\
\hline 
3& 0.1260  & 0.1260\\
\hline 
4& 0.0266 & 0.0266\\
\hline 
5& 0.0247& 0.0247\\
\hline 
6& 0.0199  & 0.0199\\
\hline 
7& 0.0183 & 0.0183\\
\hline 
8& 0.0129  & 0.0129\\
\hline 
9& 0.0120 & 0.0120\\
\hline 
10& 0.0106  & 0.0105 \\
\hline 
\end{tabular}
\end{minipage}
\end{table}

\noindent {\bf Analysis 2.} The running time of our proposed algorithm keeps a stable acceleration. The parameters $a_k$'s have a little difference from those of the Direct-AFD, in which the value of the integral (inner product) is yielded by the Newton-Cotes rules. It does not cause any influence on the relative errors in this example.

\subsection{The Influence of the length of samples}

\noindent The proposed algorithm in Section $3$ is of $\mathcal{O}\left(MN \log_2 N\right)$ computation complexity. Compared to Direct-AFD with the computational complexity $\mathcal{O}\left(MN^2\right)$, the running time should be obviously improved with an increased length of samples of the original function. Hereby, the original signal $f_1$ is sampled by $128, 256, 512, 1024, 2048$ points respectively. We observe the running times of the proposed algorithm approximating 10 steps with the sampled signals. The running time can be found in Table \ref{table10}.
\begin{table}[h]
\caption{Running time (s) }
\label{table10}
\centering
\begin{tabular}{|c|c|c|c|c|c|c|}
\hline
The length of samples&128	&256&	512&	1024&	2048&	4096\\
\hline 
FFT-AFD&0.0331&	0.0634&	0.1325&	0.2662&	0.5416&	1.1243\\
\hline 
Direct-AFD&0.0369&	0.1049&	0.3530&	1.3087&	5.1891&	24.3431\\
\hline 
\end{tabular}
\end{table}

\noindent The parameters have a little difference from those from Direct-AFD, when $N = 2^7, 2^8, 2^9, 2^{10}, 2^{11}, 2^{12}$, respectively. Moreover, it does not cause any influence on the relative errors in this example.

\begin{remark}

In our algorithm presented in the subsections $3.2$ and the numerical experiments in Section $4$, we have considered the case of the dynamic parameter choice $N = 2^K$. 
Moreover, following the argument contained in Ref. e.g. $\cite{CM}$, our algorithm is still valid when the dynamic parameter choice $N = p^K$ with $p$ being an arbitrary prime is considered.

\end{remark}



\end{document}